\newtheorem{thm}{Theorem}[section]
\newtheorem*{nota}{Notation}
\newtheorem{rem}[thm]{Remark}
\newtheorem{lem}[thm]{Lemma}
\newtheorem{prop}[thm]{Proposition}
\newtheorem{coro}[thm]{Corollary}
\newcommand{\R}{\mathbb{R}}
\newcommand{\N}{\mathbb{N}}
\newcommand{\Z}{\mathbb{Z}}
\newcommand{\pr}{\mathcal{P}}
\title[An Extension of a Theorem of Duffin and Schaeffer ]{An Extension of a Theorem of Duffin and Schaeffer in Diophantine Approximation}
\author{Faustin ADICEAM}
\date{}
\def\imod#1{\allowbreak\mkern10mu({\operator@font mod}\,\,#1)}
\keywords{Diophantine approximation, Duffin and Schaeffer}
\subjclass[2010]{11J83, 11K60}
\address{Department of Mathematics, National University of Ireland at Maynooth}
\email{fadiceam@gmail.com}
\begin{document}

\begin{abstract}
Duffin and Schaeffer have generalized the classical theorem of Khintchine in metric Diophantine approximation in the case of any error function under the assumption that all the rational approximants are irreducible. This result is extended to the case where the numerators and the denominators of the rational approximants are related by a congruential constraint stronger than coprimality.
\end{abstract}

\maketitle

For convenience, the following notation shall be used throughout~:
\paragraph{\textbf{\large{Notation}}\\}
\begin{itemize}
\item $ \lfloor x \rfloor$ ($x \in \R$)~: the integer part of $x$.
\item $ \llbracket x , y \rrbracket$ ($x, y\in\R$, $x \le y$)~: interval of integers, i.e. $\llbracket x , y \rrbracket = \left\{ n\in\Z \; : \; x\le n \le y\right\}$. 
\item $\textrm{Card}(X)$ or $|X|$~: the cardinality of a finite set $X$.
\item $A^{\times}$~: the set of invertible elements of a ring $A$.
\item $\pr$~: the set of prime numbers.
\item $\pi$~: any prime number. 
\item $\varphi (n)$~: Euler's totient function.
\item $\tau (n)$~: the number of divisors of a positive integer $n$.
\item $\omega (n)$~: the number of distinct prime factors dividing an integer $n\ge 2$ ($\omega (1)=0$).
\end{itemize}

\section{Introduction and statement of the result}

\paragraph{}
The well--known theorem of Duffin and Schaeffer~\cite{dufsch} in metric number theory extends the classical theorem of Khintchine in the following way~:

\begin{thm}[Duffin \& Schaeffer, 1941]\label{dufsch} 
Let $\left(q_k \right)_{k\ge 1}$ be a strictly increasing sequence of positive integers and let $(\alpha_k)_{k\ge 1}$ be a sequence of non--negative real numbers which satisfies the conditions~:
\begin{align*}
& \textbf{(a)} \quad \sum_{k=1}^{+\infty} \alpha_{k} = +\infty , \\
& \textbf{(b)} \quad \sum_{k=1}^n\frac{\alpha_k \, \varphi\left(q_k\right)}{q_k} > c\sum_{k=1}^n \alpha_k \textrm{  for arbitrarily many integers } n\ge 1 \textrm{ and a real number } c>0.
\end{align*}

Then for almost all $x\in\R$ there exist arbitrarily many relatively prime integers $p_k$ and $q_k$ such that  $$\left|x-\frac{p_k}{q_k} \right| < \frac{\alpha_k}{q_k}\cdotp$$
\end{thm}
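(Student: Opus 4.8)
The plan is to recast the conclusion as a full-measure statement about a $\limsup$ set and prove it by a second-moment (quasi-independence) argument. First I would pass to the circle $\R/\Z$: the set of $x\in\R$ satisfying the conclusion is $1$-periodic, so it suffices to show that
$$\Lambda \;:=\; \limsup_{k\to\infty} E_k \;\subset\; \R/\Z, \qquad E_k \;:=\; \bigcup_{\substack{1\le p\le q_k\\(p,q_k)=1}}\Bigl(\tfrac{p}{q_k}-\tfrac{\alpha_k}{q_k},\ \tfrac{p}{q_k}+\tfrac{\alpha_k}{q_k}\Bigr)$$
has Lebesgue measure $1$, since $x\in\Lambda$ precisely when $x$ admits infinitely many admissible approximants. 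A routine truncation reduces matters to the case $\alpha_k\le\tfrac12$ for every $k$: replacing $\alpha_k$ by $\min(\alpha_k,\tfrac12)$ only shrinks each $E_k$, and one checks from \textbf{(a)} and \textbf{(b)} that the truncated sequence still satisfies $\sum_k\min(\alpha_k,\tfrac12)\,\varphi(q_k)/q_k=+\infty$ (note already that \textbf{(a)} together with \textbf{(b)} forces $\sum_k\alpha_k\varphi(q_k)/q_k=+\infty$). Once $\alpha_k\le\tfrac12$ the arcs composing $E_k$ are pairwise disjoint, so $|E_k|=2\alpha_k\varphi(q_k)/q_k$ and hence $\sum_{k\ge1}|E_k|=+\infty$.

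Next I would establish $|\Lambda|>0$ through the Chung--Erd\H{o}s form of the divergence Borel--Cantelli lemma: granting a constant $C$ with
$$\sum_{M\le k,l\le N}|E_k\cap E_l| \;\le\; C\Bigl(\sum_{M\le k\le N}|E_k|\Bigr)^{2} \qquad\text{for all }M\text{ and all large }N,$$
one gets $\bigl|\bigcup_{M\le k\le N}E_k\bigr|\ge 1/C$; letting $N\to\infty$ (using $\sum_k|E_k|=+\infty$) and then $M$ arbitrary gives $|\Lambda|\ge 1/C$. Everything thus reduces to the \emph{quasi-independence estimate} $\sum_{k<l\le N}|E_k\cap E_l|\ll\bigl(\sum_{k\le N}|E_k|\bigr)^{2}$, the diagonal contribution $\sum_k|E_k|$ being of lower order since that sum, though infinite, is eventually dwarfed by its square.

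This estimate is, I expect, the main obstacle. For $k\ne l$, $E_k\cap E_l$ is a disjoint union of arcs indexed by pairs $(p,p')$ with $(p,q_k)=(p',q_l)=1$, $1\le p\le q_k$, $1\le p'\le q_l$ and $\|p/q_k-p'/q_l\|<\alpha_k/q_k+\alpha_l/q_l$, each of length at most $2\min(\alpha_k/q_k,\alpha_l/q_l)$; so $|E_k\cap E_l|$ is dominated by that minimum times the number of integer solutions of $|pq_l-p'q_k|<\alpha_kq_l+\alpha_lq_k$ in $[1,q_k]\times[1,q_l]$ subject to the two coprimality constraints. Writing $d=(q_k,q_l)$ and fixing the value of $pq_l-p'q_k$ (necessarily a multiple of $d$), the coprimality of $p$ and of $p'$ translates, after a M\"obius inversion over common divisors, mostly into a coprimality condition on the reduced difference $(pq_l-p'q_k)/d$; counting then yields a main term comparable to $|E_k||E_l|$ multiplied by a factor depending only on the primes dividing $d$, plus lower-order error terms involving divisor quantities such as $2^{\omega(q_kq_l)}$ and $\tau(q_kq_l)$. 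Summing over $k,l\le N$ one runs into exactly the difficulty of the Duffin--Schaeffer \emph{conjecture}, namely that the common-prime factor can be large; and it is precisely here that hypothesis \textbf{(b)} is decisive. It says that the $\alpha_k$-weighted averages of $\varphi(q_k)/q_k$ stay bounded away from $0$, i.e.\ that the denominators do not, in aggregate, accumulate small prime divisors, which is just what is needed to bound $\sum_{k,l\le N}|E_k||E_l|\cdot(\text{common-prime factor})$ by $O\!\bigl((\sum_{k\le N}|E_k|)^{2}\bigr)$, the error terms being absorbed by the same bookkeeping. Carrying out this summation rigorously, with careful control of the divisor-type factors, is the delicate core of the argument.

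Finally, to upgrade $|\Lambda|>0$ to $|\Lambda|=1$ I would rerun the whole argument inside an arbitrary subinterval $I\subset\R/\Z$ in place of the full circle: for $q_k$ large the number of reduced fractions $p/q_k$ lying in $I$ is $\varphi(q_k)|I|+O(2^{\omega(q_k)})$, so $|E_k\cap I|$ is comparable to $|E_k|\,|I|$ up to errors that remain of lower order after summation, and similarly for the correlations; the Chung--Erd\H{o}s step then gives $|\Lambda\cap I|\ge\kappa|I|$ with $\kappa>0$ independent of $I$, and Lebesgue's density theorem forces $|\Lambda|=1$. Alternatively, Gallagher's zero-one law for sets of the form $\limsup_k E_k$ makes $|\Lambda|\in\{0,1\}$, so that $|\Lambda|>0$ yields $|\Lambda|=1$ at once.
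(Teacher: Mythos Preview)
The paper does not itself prove this theorem; it is quoted from Duffin and Schaeffer's 1941 paper and used as a template for the generalizations. What the paper isolates (Lemma~3.1 and Lemma~3.4 here, Lemmas~III and~IV in the original) is exactly the equidistribution estimate you invoke in your final paragraph, namely that the number of reduced fractions $p/q_k$ in an interval $I$ is $\varphi(q_k)|I|+O(2^{\omega(q_k)})$.

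Your overall architecture---pass to $\R/\Z$, truncate to $\alpha_k\le\tfrac12$, second moment via Chung--Erd\H{o}s, then density/Gallagher---is correct and is the structure of the original proof. The gap is in the overlap step. You propose a Pollington--Vaughan style bound $|E_k\cap E_l|\ll|E_k||E_l|\cdot(\text{factor depending on primes dividing }\gcd(q_k,q_l))$ and then claim that hypothesis~\textbf{(b)} controls the double sum $\sum_{k,l}|E_k||E_l|\cdot(\text{common-prime factor})$. But bounding that sum by $(\sum_k|E_k|)^2$ is essentially the Duffin--Schaeffer \emph{conjecture}; the heuristic ``the denominators do not, in aggregate, accumulate small prime divisors'' does not turn~\textbf{(b)} into such a bound without substantial extra work, and you have not indicated how.

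The original argument avoids this entirely. Because the numerators are coprime to the denominators, $p/q_k\neq p'/q_l$ whenever $q_k\neq q_l$, hence $|pq_l-p'q_k|\ge1$; an elementary count (Duffin--Schaeffer's Lemma~II) then gives the crude but sufficient bound
\[
|E_k\cap E_l|\;\le\;C\,\alpha_k\alpha_l
\]
with an absolute constant and \emph{no} common-prime analysis. Summing yields $\sum_{k,l\le N}|E_k\cap E_l|\ll(\sum_{k\le N}\alpha_k)^2$, and hypothesis~\textbf{(b)} is used only at the last step, to compare $\sum_{k\le N}\alpha_k$ with $\sum_{k\le N}|E_k|=2\sum_{k\le N}\alpha_k\varphi(q_k)/q_k$ along the subsequence of $N$ it provides. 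So~\textbf{(b)} is decisive, but for a much more elementary reason than the one you sketch.
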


Here as elsewhere, \textit{almost all} must be understood in the sense that the set of exceptions has Lebesgue measure zero.

\paragraph{}
Several generalizations of Theorem~\ref{dufsch} have been considered~: on the one hand, the conjecture of Duffin and Schaeffer asks whether assumption~(b) may be weakened in the statement of the result by replacing it by the divergence of the series $\sum_{k=1}^n\alpha_k \, \varphi\left(q_k\right)q_k^{-1}$. Even if the analogue of this issue has been proved in higher dimensions~\cite{dufschdim} or with some extra assumptions on the sequence $\left(\alpha_k \right)_{k\ge 1}$~\cite{dufschextra}, the full conjecture is still open. On the other hand, one may try to see to what extent Theorem~\ref{dufsch} remains true when the numerators $p_k$ and the denominators $q_k$ of the fractional approximations are related by some stronger relationship (in a sense to be made precise) than coprimality.

Indeed, metric Diophantine approximation results in one dimension when the denominators of the rational approximants are confined to a prescribed set are numerous (see for instance~\cite{bugeaud}, Theorem~5.9). However,  restrictions on numerators introduce new difficulties which do not always seem to be easy to overcome (see~\cite{bugeaud}, p.114 for an account on this fact). In a series of articles, \cite{g1}, \cite{g2}, \cite{g3} \& \cite{g4}, G.Harman tackled the problem and gave several results in the case where denominators and numerators were confined to vary within independent sets of integers. The main theorem proved in this paper gives another approach to this problem studying the case where numerators and denominators are confined to dependent sets of integers in the sense that they are related, not only by the relation of Diophantine approximation of a given real number, but also by some congruential constraints~:

\begin{thm}(Extension of the theorem of Duffin and Schaeffer).\label{extensiondf}
Let $(q_k)_{k\ge 1}$ be a strictly increasing sequence of positive integers and let $(\alpha_k)_{k\ge 1}$ be a sequence of positive real numbers. Let $(a_k)_{k\ge 1}$ be a sequence such that for all $k\ge 1$, $a_k \in \left(\Z/\!\raisebox{-.65ex}{\ensuremath{q_k\Z}}\right)^{\times}$. For $k\ge 1$, denote by $G_k$ a subgroup of $\left(\Z/\!\raisebox{-.65ex}{\ensuremath{q_k\Z}}\right)^{\times}$ and by $a_kG_k$ the coset of $a_k$ in the quotient of $\left(\Z/\!\raisebox{-.65ex}{\ensuremath{q_k\Z}}\right)^{\times}$ by $G_k$. Assume furthermore that~:
\begin{align*}
& \textbf{(a)} \quad \sum_{k=1}^{+\infty} \alpha_{k} = +\infty , \\
& \textbf{(b)} \quad \sum_{k=1}^{n} \alpha_k \frac{\left| G_k \right|}{q_k} > c\sum_{k=1}^{n} \alpha_k \;\; \textrm{ for infinitely many positive integers } n \textrm{ and a real number } c>0, \\
& \textbf{(c)} \quad \frac{\varphi(q_k)}{q_k^{1/2-\epsilon} \left|G_k \right|} \longrightarrow 0 \; \textrm{ as } k \textrm{ tends to infinity,  for some } \epsilon>0.
\end{align*}

Then, for almost all $x \in\R$, there exist arbitrarily many relatively prime integers $p_k$ and $q_k$ such that 
\begin{align}\label{anecd}
\left|x - \frac{p_k}{q_k} \right| < \frac{\alpha_k}{q_k}\quad \textrm{and} \quad p_k\in a_kG_k.
\end{align}
\end{thm}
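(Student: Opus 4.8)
The strategy is the one familiar from Duffin--Schaeffer type results: realise the set of $x$ satisfying~\eqref{anecd} for infinitely many $k$ as a $\limsup$ set, show that the measures of the constituent sets diverge, establish quasi-independence on average, and conclude by a divergence Borel--Cantelli lemma together with a zero--one law. One may work throughout on $[0,1)$, the set in~\eqref{anecd} being invariant under $x\mapsto x+1$, and one may assume $\alpha_k<1/2$ for every $k$ (if $\alpha_k\ge 1/2$ for infinitely many $k$ the conclusion is immediate, since every coset $a_kG_k$ has a representative in $\llbracket 1,q_k\rrbracket$ coprime to $q_k$). For $k\ge 1$ put
\[
E_k\;=\;\bigcup_{\substack{1\le p\le q_k,\ \gcd(p,q_k)=1\\ p\bmod q_k\,\in\,a_kG_k}}\left(\frac{p}{q_k}-\frac{\alpha_k}{q_k},\;\frac{p}{q_k}+\frac{\alpha_k}{q_k}\right),
\]
so that the $x$ in the theorem are exactly those lying in $\limsup_k E_k$. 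A coset of $G_k$ in $\left(\Z/q_k\Z\right)^{\times}$ contains $|G_k|$ residues, all coprime to $q_k$, and for $\alpha_k<1/2$ the intervals above are pairwise disjoint; hence $|E_k|=2\alpha_k|G_k|/q_k$. Assumptions~(a) and~(b) then give $\sum_k|E_k|\ge 2c\sum_k\alpha_k=+\infty$.

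By a standard divergence Borel--Cantelli lemma (quasi-independence version), $|\limsup_k E_k|$ is at least a positive absolute constant once one establishes
\begin{equation}\label{qi}
\sum_{1\le k,l\le n}|E_k\cap E_l|\;\ll\;\Big(\sum_{k\le n}|E_k|\Big)^{2}+\sum_{k\le n}|E_k|\qquad\text{for infinitely many }n.
\end{equation}
To pass from positive to full measure, I would re-run the whole argument with each $E_k$ replaced by $E_k\cap I$ for an arbitrary subinterval $I\subseteq[0,1)$ (the counting below localises to $I$ with no new difficulty as soon as $q_k|I|\gtrsim 1$) and conclude by the Lebesgue density theorem; alternatively one may invoke a Gallagher-type zero--one law adapted to the congruential constraint.

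Everything thus reduces to~\eqref{qi}, which is the heart of the matter. Writing $|E_k\cap E_l|=\sum_{p_k,p_l}\mathbf{1}[p_k\in a_kG_k]\,\mathbf{1}[p_l\in a_lG_l]\,|I_{p_k}\cap I_{p_l}|$, where $I_p$ is the defining interval centred at $p/q_k$ (and similarly for $l$), I would insert the identity
\[
\mathbf{1}[p\bmod q_k\in a_kG_k]\,\mathbf{1}[\gcd(p,q_k)=1]\;=\;\frac{|G_k|}{\varphi(q_k)}\sum_{\substack{\chi\bmod q_k\\ \chi|_{G_k}=1}}\overline{\chi(a_k)}\,\chi(p),
\]
a sum over exactly $\varphi(q_k)/|G_k|$ Dirichlet characters, together with its analogue for $q_l$. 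This splits $|E_k\cap E_l|$ into a principal part (both characters principal) and an error part (at least one of them non-principal). The principal character reproduces $\mathbf{1}[\gcd(p,q_k)=1]$, so the principal part equals $\dfrac{|G_k||G_l|}{\varphi(q_k)\varphi(q_l)}$ times the corresponding quantity for the classical coprimality-only problem with the same data $(q_k,\alpha_k)$; since $\varphi(q_k)\ge|G_k|$, hypothesis~(b) forces the classical hypothesis~(b) of Theorem~\ref{dufsch} along the same $n$'s, so the proof of that theorem supplies the analogue of~\eqref{qi} for the classical sets, and multiplying the normalisation $|G_k|/\varphi(q_k)$ back in returns~\eqref{qi} for this part. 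The error terms are incomplete character sums along arithmetic progressions — equivalently, after Fourier expansion of the $E_k$ on $[0,1)$, twisted Gauss sums $\sum_{p\bmod q}\chi(p)\,e^{-2\pi i np/q}$ weighted by interval Fourier coefficients of size $\ll\min(\alpha_k/q_k,|n|^{-1})$ — and each non-principal character contributes $\ll\tau(q_k)\sqrt{q_k}$ by the P\'olya--Vinogradov inequality ($\ll\sqrt{q_k}$ when primitive, with vanishing unless a suitable divisibility holds). Summing over the $\le\varphi(q_k)/|G_k|$ characters mod $q_k$, the $\le\varphi(q_l)/|G_l|$ mod $q_l$, and over the relevant frequencies, the character counts cancel the normalising factors $|G_k||G_l|/(\varphi(q_k)\varphi(q_l))$, and one is left with the principal bound multiplied by a factor of the shape $\bigl(\sqrt{q_k}/|G_k|\bigr)\bigl(\sqrt{q_l}/|G_l|\bigr)\tau(q_k)\tau(q_l)\log q_k\log q_l$ (with one of the first two factors replaced by $1$ when the matching character is principal). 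Hypothesis~(c), which is exactly $\varphi(q_k)/|G_k|=o(q_k^{1/2-\epsilon})$, combined with $\varphi(q_k)\gg q_k^{1-\epsilon/2}$ for $q_k$ large, yields $\sqrt{q_k}/|G_k|=o(q_k^{-\epsilon/2})$, a power saving that dominates all the divisor- and logarithm-losses; hence the error is $o(1)$ times the principal bound (indeed summable against it), and~\eqref{qi} follows.

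The measure count for $|E_k|$, the divergence from~(a)--(b), the Borel--Cantelli input and the density/zero--one endgame are all routine; the real work — and the place where~(c) with its exponent $1/2-\epsilon$ is essential — is the error estimate just described. One must handle the imprimitive characters that are trivial on $G_k$ so as to lose only a divisor factor, control the genuinely bilinear character sums arising when $\chi_k$ and $\chi_l$ are simultaneously non-principal (these are supported on the affine line $p_kq_l-p_lq_k=m$ with $|m|$ small, and are treated by completing the sum), and arrange the bookkeeping so that the single power saving $q_k^{\epsilon/2}$ afforded by~(c) genuinely absorbs every accumulated divisor and logarithm factor after the double summation over $k,l\le n$ and the comparison with $\big(\sum_{k\le n}|E_k|\big)^{2}$. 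I expect this to be the main obstacle.
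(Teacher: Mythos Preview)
Your high-level strategy --- divergent $\limsup$, quasi-independence, Borel--Cantelli, density/zero--one --- is sound and is the skeleton behind the paper too. The implementation, however, diverges in a way that creates work you need not do. The paper never estimates $|E_k\cap E_l|$ by inserting characters on \emph{both} moduli at once. Instead it proves a one-variable equidistribution statement (Theorem~\ref{regularitessgrpe}): for any coset $aG_n$ of a subgroup $G_n\le(\Z/n\Z)^\times$ and any $\mu>0$,
\[
\#\{1\le p\le \mu n:\ p\bmod n\in aG_n\}\;=\;|G_n|\Big(\mu+o\Big(\frac{\varphi(n)/|G_n|}{n^{1/2-\epsilon}}\Big)\Big),
\]
via exactly your character detector and a single application of P\'olya--Vinogradov. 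From this it reads off $\lambda(A\cap E_k)\le\lambda(A)\,\lambda(E_k)(1+o(1))$ for every finite union of intervals $A$ (Lemma~\ref{lem58}), the $o(1)$ controlled precisely by hypothesis~(c). That lemma is the \emph{only} analytic input Duffin and Schaeffer's original argument requires; the paper then literally quotes pages~248--250 of~\cite{dufsch}. No bilinear character sum and no separate zero--one step are needed: the 1941 proof already delivers full measure directly from interval regularity together with~(a) and~(b).

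Your route runs into the obstacle you yourself flag. When both $\chi_k$ and $\chi_l$ are non-principal, the overlap constraint $|p_kq_l-p_lq_k|<\alpha_kq_l+\alpha_lq_k$ leaves only $O(1)$ admissible $p_l$ for each fixed $p_k$, so P\'olya--Vinogradov gives nothing in the $p_l$-sum; what remains is a twisted correlation along a thin line, closer to a Kloosterman-type problem than to a long character sum. It may be tractable, but it is unforced. If you wish to keep the second-moment framework, insert the character detector on \emph{one} modulus only, treating $E_l$ as a fixed finite union of $|G_l|$ intervals: the principal term is $\tfrac{|G_k|}{\varphi(q_k)}\lambda(F_k\cap E_l)$ with $F_k$ the coprimality-only set, and each of the $\varphi(q_k)/|G_k|-1$ non-principal characters contributes an incomplete sum bounded by P\'olya--Vinogradov uniformly in the interval, hence $o(|E_k|\,|E_l|)$ under~(c). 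A smaller caveat: the 1941 Duffin--Schaeffer proof does not itself proceed via a second-moment inequality, so your line ``the proof of that theorem supplies the analogue'' for the principal part needs either a different reference (e.g.\ the overlap estimates in Sprind\v{z}uk~\cite{sprin} or Harman) or a short direct argument.
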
  

\begin{rem}
In Theorem~\ref{extensiondf}, condition~(b) is obviously implied by the fact that 
\begin{align}\label{condc}
\frac{\left| G_k \right|}{q_k} > c >0
\end{align} 
for all $k \ge 1$ and for a real number $c>0$. However, if, instead of~(\ref{condc}), one can prove the weaker assertion  
\begin{align}\label{condcabel}
\sum_{k=1}^{n}\frac{\left| G_k \right|}{q_k} > c n
\end{align} 
for some $c>0$ and all integers $n\ge 1$, then, assuming that the sequence $(\alpha_k)_{k\ge 1}$ is non-increasing, condition~(b) still holds true. This may be seen by making an Abel transformation in the left-hand side of (b).

It is likely that formula~(\ref{condcabel}) can be proved for many sequences $\left(q_k\right)_{k\ge 1}$ that do not satisfy~(\ref{condc}). 
\end{rem}

As an application of Theorem~\ref{extensiondf}, consider a subsequence $\left(q_k^d\right)_{k\ge 1}$ of the $d^{th}$ powers of the natural numbers ($d\ge 1$ is an integer) and take for $G_k$ ($k\ge 1$) the group of $d^{th}$ powers in a reduced system of residues modulo $q_k$. For any $q\in\N$ denote furthermore by $r_d(q)$ the cardinality of the set of $d^{th}$ powers in a reduced system of residues modulo $q$ and set for simplicity
\begin{align}\label{defsdq}
s_d(q) :=\frac{r_d(q)}{q}\cdotp
\end{align}

\begin{coro}\label{thminteret}
Let $(q_k)_{k\ge 1}$ be a strictly increasing sequence of positive integers and let $(\alpha_k)_{k\ge 1}$ be a sequence of positive real numbers. Fix an integer $a\ge 1$ and assume furthermore that~:
\begin{align*}
& \textbf{(a)} \quad \sum_{k=1}^{+\infty} \alpha_{k} = +\infty , \\
& \textbf{(b)} \quad \sum_{k=1}^{n} \alpha_k s_d\left( q_k^d \right) > c\sum_{k=1}^{n} \alpha_k \;\; \textrm{ for infinitely many positive integers } n \textrm{ and a real number } c>0, \\
& \textbf{(c)} \quad \gcd(q_k , a)=1 \;\; \textrm{ for all } k\ge 1. 
\end{align*}
Then for almost all $x \in\R$, there exist arbitrarily many relatively prime integers $p_k$ and $q_k$ such that 
\begin{align*}
\left|x - \frac{p_k}{q_k^d} \right| < \frac{\alpha_k}{q_k^d}\quad \textrm{and} \quad p_k\equiv a b_k^d\imod{q_k}\; \textrm{ for some } b_k\in\Z \textrm{ relatively prime to } q_k.
\end{align*}
\end{coro}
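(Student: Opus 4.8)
\emph{Proof sketch.}

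The plan is to obtain Corollary~\ref{thminteret} as a direct consequence of Theorem~\ref{extensiondf} applied with $(q_k^d)_{k\ge 1}$ in the role of the sequence of denominators; this is again a strictly increasing sequence of positive integers, and the $\alpha_k$ are still positive, so those structural hypotheses are inherited. For each $k\ge 1$ I would take $a_k$ to be the residue class of $a$ modulo $q_k^d$: since $\gcd(q_k,a)=1$ forces $\gcd(q_k^d,a)=1$, indeed $a_k\in(\Z/q_k^d\Z)^{\times}$. For the subgroup I would take $G_k$ to be the subgroup of $d^{th}$ powers in $(\Z/q_k^d\Z)^{\times}$, so that $|G_k|=r_d(q_k^d)$ and hence, by~(\ref{defsdq}), $|G_k|/q_k^d=s_d(q_k^d)$. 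With these choices hypothesis~(a) of Theorem~\ref{extensiondf} is exactly hypothesis~(a) of the corollary, while hypothesis~(b) of the theorem reads $\sum_{k=1}^n\alpha_k\,s_d(q_k^d)>c\sum_{k=1}^n\alpha_k$ for infinitely many $n$, which is hypothesis~(b) of the corollary.

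What remains is to check hypothesis~(c) of Theorem~\ref{extensiondf}, i.e.\ that $\varphi(q_k^d)/\big((q_k^d)^{1/2-\epsilon}\,r_d(q_k^d)\big)\longrightarrow 0$ for some $\epsilon>0$. The ratio $\varphi(q_k^d)/r_d(q_k^d)$ is precisely the index of the subgroup of $d^{th}$ powers in $(\Z/q_k^d\Z)^{\times}$. Decomposing $(\Z/n\Z)^{\times}$ as a product of at most $\omega(n)+1$ cyclic groups and using that on a cyclic group the $d^{th}$-power map has image of index dividing $d$, this index is at most $d^{\omega(n)+1}$; taking $n=q_k^d$ and noting that $\omega(q_k^d)=\omega(q_k)$, it is at most $d^{\omega(q_k)+1}$. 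Since $\omega(m)=O\big(\log m/\log\log m\big)$, this gives $d^{\omega(q_k)+1}=q_k^{o(1)}=(q_k^d)^{o(1)}$ as $k\to\infty$, so for any fixed $\epsilon\in(0,1/2)$ the quantity in~(c) is $(q_k^d)^{o(1)-(1/2-\epsilon)}\to 0$. Hence~(c) holds.

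Having verified~(a),~(b) and~(c), Theorem~\ref{extensiondf} applies and produces, for almost every $x\in\R$, infinitely many indices $k$ together with relatively prime integers $p_k$ and $q_k^d$ such that $|x-p_k/q_k^d|<\alpha_k/q_k^d$ and $p_k\in a_kG_k$. It then remains only to unwind the last membership: $p_k\in a_kG_k$ means $p_k\equiv a\,g\imod{q_k^d}$ for some $g$ that is a $d^{th}$ power of a unit modulo $q_k^d$, say $g\equiv b_k^d\imod{q_k^d}$ with $\gcd(b_k,q_k^d)=1$; reducing this congruence modulo $q_k$ yields $p_k\equiv a\,b_k^d\imod{q_k}$ with $\gcd(b_k,q_k)=1$, while $\gcd(p_k,q_k^d)=1$ is equivalent to $\gcd(p_k,q_k)=1$. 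This is exactly the conclusion of the corollary. The only genuine computation in this scheme is the index estimate entering~(c) --- equivalently the bound $\varphi(q^d)/r_d(q^d)=q^{o(1)}$, which forces one to invoke the sharp maximal order of $\omega$ rather than the trivial bound $\omega(n)\le\log_2 n$ --- so I expect that (mild) point to be the main obstacle, everything else being a translation of the hypotheses.
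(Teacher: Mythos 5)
Your deduction of the Corollary from Theorem~\ref{extensiondf} is formally sound, but it runs in exactly the opposite direction to the paper, and in the context of the paper it is circular. The paper proves Corollary~\ref{thminteret} \emph{directly}, by redoing the Duffin--Schaeffer argument: the key input is the equidistribution of a coset $aG_{q_k}^{(d)}$ in $\llbracket 1,\mu q_k\rrbracket$, obtained from Dirichlet characters and the P\'olya--Vinogradov inequality (Theorem~\ref{regularitessgrpe} and Corollary~\ref{regularitepuissances}), followed by the overlap estimate of Lemma~\ref{lem58} and the endgame of~\cite{dufsch}. Theorem~\ref{extensiondf} is then obtained in the final subsection by the remark that ``the same proof with minor modifications'' applies. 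Consequently, if one replaces the paper's proof of the Corollary by yours, the Theorem is left without any proof at all: your argument is acceptable only if one first writes out an independent proof of Theorem~\ref{extensiondf}, which is essentially the work you were trying to avoid. What your route buys is a clean logical reduction showing the Corollary is (almost) a formal specialization of the Theorem; what it costs is that it does not engage with any of the actual analytic content.

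On the technical side your verification of hypothesis~(c) is correct and is essentially the paper's own computation: the index of the $d^{th}$ powers is $u_d(q_k^d)\le (2d)^{\omega(q_k)}=o(q_k^{\epsilon})$ by Proposition~\ref{decompte} and Lemma~\ref{tau} (your $d^{\omega+1}$ bound via the cyclic decomposition is the same estimate, and note that $\omega(n)=o(\log n)$ already suffices --- the sharp maximal order of $\omega$ is not needed). Be aware, however, that your specialization is not quite the one the paper has in mind. You take $G_k$ to be the $d^{th}$ powers modulo $q_k^d$, so your numerators lie in a coset modulo $q_k^d$ and your hypothesis~(b), read via~(\ref{defsdq}), involves $r_d(q_k^d)/q_k^d$; the paper instead constrains the numerators only in a coset of the $d^{th}$ powers modulo $q_k$ and counts them up to $q_k^d$, so the density entering its argument is $r_d(q_k)q_k^{d-1}/q_k^d=r_d(q_k)/q_k$. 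These two densities do not coincide in general (for $q_k=d=3$ one has $r_3(27)/27=2/9$ while $r_3(3)/3=2/3$), so your version assumes a somewhat stronger hypothesis~(b) and delivers a somewhat stronger congruence (modulo $q_k^d$ rather than modulo $q_k$). To reproduce the paper's statement exactly from Theorem~\ref{extensiondf} you should instead take $G_k$ to be the full preimage in $(\Z/q_k^d\Z)^{\times}$ of the group of $d^{th}$ powers modulo $q_k$, for which $|G_k|/q_k^d=r_d(q_k)/q_k$ and the index is $u_d(q_k)$.
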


Corollary~\ref{thminteret} answers a question which appeared in a problem of simultaneous Diophantine approximation of dependent quantities~: given an integer polynomial $P(X)$ and a real number $x$, what is the Hausdorff dimension of the set of real numbers $t$ such that $t$ and $P(t)+x$ are simultaneously $\tau$--well approximable, where $\tau>0$? The author proved~\cite{aparaitre} that such a simultaneous approximation implied an approximation of $x$ by a rational number $p/q^d$, where $d$ is the degree of $P(X)$ and where the integer $p$ satisfies the congruential constraint mentioned in the conclusion of Corollary~\ref{thminteret}, with $a$ the leading coefficient of $P(X)$. The emptiness of the set under consideration is obtained for almost all $x$ as a consequence of the convergent part of the Borel--Cantelli Lemma when $\tau > d+1$ and Corollary~\ref{thminteret} enables one to prove the optimality of this lower bound.

\paragraph{}
The paper is organized as follows~: first some lemmas of an arithmetical nature shall be recalled (section~\ref{sec2}). They shall be needed to prove Corollary~\ref{thminteret} in section~\ref{sec3}, where the modifications to make in the proof to prove Theorem~\ref{extensiondf} shall also be indicated.

\section{Some auxiliary results}\label{sec2}

In this section are collected various results which shall be needed later.

\subsection{Some lemmas in arithmetic}
For any integer $n\ge 2$, let $\tau (n)$ be the number of divisors of $n$ and let $\omega (n)$ be the number of \textit{distinct} prime factors dividing $n$. If $$n= \prod_{i=1}^r \pi_{i}^{\alpha_i}$$ is the prime factor decomposition of the integer $n$, recall that
\begin{align*}
\omega(n) & = r \quad \mbox{ and } \quad \tau(n) = \prod_{i=1}^r \left( \alpha_i +1 \right).
\end{align*}

The following lemma, which deals with some comparative growth properties about these two arithmetical functions, is well--known.

\begin{lem}\label{tau}
\begin{itemize}
\item For any $\epsilon >0$, $\tau(n)=o\left(n^ \epsilon \right)$.
\item For any $\epsilon >0$ and any positive integer $m$, $\omega(n)=o\left( \log n \right)$ and $m^{\omega(n)}=o\left(n^\epsilon \right).$
\end{itemize}
\end{lem}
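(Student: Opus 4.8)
The plan is to exploit the multiplicativity of $\tau$ together with the elementary fact that a polynomial is dominated by an exponential, and then to upgrade the resulting $O$-bounds to $o$-bounds by shrinking $\epsilon$.

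First I would establish that $\tau(n)=O_\epsilon\!\left(n^\epsilon\right)$. Writing $n=\prod_{i=1}^r\pi_i^{\alpha_i}$ for the prime factorisation, one has
\[
\frac{\tau(n)}{n^\epsilon}=\prod_{i=1}^r\frac{\alpha_i+1}{\pi_i^{\epsilon\alpha_i}}\cdotp
\]
As soon as $\pi_i^\epsilon\ge2$, that is $\pi_i\ge 2^{1/\epsilon}$, the $i$-th factor is at most $(\alpha_i+1)\big/2^{\alpha_i}\le1$. For each of the finitely many primes $\pi<2^{1/\epsilon}$, the quantity $(\alpha+1)\big/\pi^{\epsilon\alpha}$ stays bounded over all integers $\alpha\ge0$ by a constant $C_\pi$, the numerator being linear and the denominator exponential in $\alpha$. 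Hence $\tau(n)/n^\epsilon\le\prod_{\pi<2^{1/\epsilon}}C_\pi$, a constant depending only on $\epsilon$. Applying this bound with $\epsilon/2$ instead of $\epsilon$ gives $\tau(n)=O\!\left(n^{\epsilon/2}\right)=o\!\left(n^\epsilon\right)$, which is the first assertion.

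For the estimates on $\omega$, I would bound $n$ from below by the product of the first $r=\omega(n)$ prime numbers $p_1,\dots,p_r$, since the $i$-th smallest prime divisor of $n$ is at least $p_i$; thus $\log n\ge\sum_{i=1}^r\log p_i$. Because $\log p_i\to+\infty$, the Cesàro averages $r^{-1}\sum_{i=1}^r\log p_i$ also tend to $+\infty$, whence $\log n\big/\omega(n)\to+\infty$, i.e. $\omega(n)=o(\log n)$. Finally, since each prime power in the factorisation of $n$ contributes a factor at least $2$ to $\tau(n)$, one has $2^{\omega(n)}\le\tau(n)$, so $\omega(n)\le\log_2\tau(n)$ and, for any integer $m\ge2$,
\[
m^{\omega(n)}\le m^{\log_2\tau(n)}=\tau(n)^{\log_2 m}.
\]
Combining this with the first part applied with the exponent $\epsilon/\log_2 m$ yields $m^{\omega(n)}=o\!\left(n^\epsilon\right)$, the case $m=1$ being trivial.

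There is no serious obstacle here: the only point deserving a word of justification is the uniformity over $\alpha$ of the constant $C_\pi$ above, which follows at once from the fact that $\pi^{\epsilon\alpha}$ grows exponentially in $\alpha$ while $\alpha+1$ grows only linearly, together with the harmless device of replacing $\epsilon$ by $\epsilon/2$ (respectively $\epsilon/\log_2 m$) to turn the boundedness statements into the claimed $o(\cdot)$ statements.
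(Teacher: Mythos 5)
Your proof is correct. The paper gives no argument of its own here, simply citing Hardy and Wright, and your derivation is essentially the classical one from that reference: the factor-by-factor bound $\prod_i(\alpha_i+1)\pi_i^{-\epsilon\alpha_i}$ for $\tau(n)=O_\epsilon(n^\epsilon)$, the lower bound $n\ge p_1\cdots p_{\omega(n)}$ for $\omega(n)=o(\log n)$, and $m^{\omega(n)}\le\tau(n)^{\log_2 m}$ for the last claim, so there is nothing to add.
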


\begin{proof}
See for instance~\cite{hw}, \S 22.11 and \S 22.13.
\end{proof}

If $n\ge 2$ and $d\ge 1$ are integers, recall that $r_d(n)$ denote the number of distinct $d^{th}$ powers in the reduced system of residues modulo $n$ and denote by $u_d(n)$ the number of $d^{th}$ roots of unity modulo $n$, that is,
\begin{align*}
r_d(n) &= \textrm{Card} \left\{ m^d \imod{n} \; : \; m \in \left(\Z/\!\raisebox{-.65ex}{\ensuremath{n\Z}} 
\right)^{\times} \right\}  \\
u_d(n)& = \textrm{Card} \left\{ m\in \Z/\!\raisebox{-.65ex}{\ensuremath{n\Z}} \; : \; m^d \equiv 1 \imod{n} \right\} .
\end{align*}
Set furthermore $r_d(1)=u_d(1)=1$. 

\begin{rem}\label{multirdq}
Let $u(f,n)$ be the number of solutions in $x$ of the congruence $$f(x):=\sum_{k=0}^{d} a_k x^k \equiv 0 \imod{n}$$ for a given polynomial $f\in \Z [X]$ of degree $d$. It is well--known that, as a consequence of the Chinese Remainder Theorem, $u(f,n)$ is a multiplicative function of $n$. It follows that $u_d(n)$ is multiplicative with respect to $n$ for any fixed $d$.
\end{rem}

The following proposition gives explicit formulae for $r_d(n)$ and $u_d(n)$. 

\begin{prop}\label{decompte}
The arithmetical functions $r_d(n)$ and $u_d(n)$ are multiplicative when $d$ is fixed.
Furthermore, if $n=\pi^k$, where $\pi \in \pr$ and $k\ge 1$ is an integer, then the following equalities hold~: $$r_d(n)=\frac{\varphi(\pi^k)}{u_d(\pi^k)} \quad and \quad u_d(n)=\left\{
    \begin{array}{ll}
        \gcd(2d, \varphi(n)) & \mbox{if } 2|d,\; \pi=2 \mbox{ and } k\ge 3 ,\\
        \gcd(d, \varphi(n)) & \mbox{otherwise,}
    \end{array}
\right.$$ 
where $\varphi$ is Euler's totient function.
\end{prop}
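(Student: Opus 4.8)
The plan is to reduce everything to the case of a prime power $n=\pi^k$ (multiplicativity being already recorded in Remark~\ref{multirdq} for $u_d$ via the Chinese Remainder Theorem, and following for $r_d$ from the relation $r_d(\pi^k)=\varphi(\pi^k)/u_d(\pi^k)$ combined with the multiplicativity of $\varphi$). So the core of the argument is the computation of $u_d(\pi^k)$ and the identity $r_d(\pi^k)u_d(\pi^k)=\varphi(\pi^k)$. For the latter, I would consider the group homomorphism
\begin{align*}
\psi_d \colon \left(\Z/\!\raisebox{-.65ex}{\ensuremath{\pi^k\Z}}\right)^{\times} &\longrightarrow \left(\Z/\!\raisebox{-.65ex}{\ensuremath{\pi^k\Z}}\right)^{\times}, \qquad x \longmapsto x^d.
\end{align*}
Its kernel is exactly the set of $d^{\text{th}}$ roots of unity modulo $\pi^k$, of cardinality $u_d(\pi^k)$, and its image is the set of $d^{\text{th}}$ powers in the reduced residue system, of cardinality $r_d(\pi^k)$; the first isomorphism theorem then gives $r_d(\pi^k)=\varphi(\pi^k)/u_d(\pi^k)$ immediately.

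For the explicit value of $u_d(\pi^k)$, I would split according to the structure of the unit group. When $\pi$ is odd, or when $\pi=2$ with $k\le 2$, the group $\left(\Z/\!\raisebox{-.65ex}{\ensuremath{\pi^k\Z}}\right)^{\times}$ is cyclic of order $\varphi(\pi^k)$; in a cyclic group of order $N$ the equation $x^d=1$ has exactly $\gcd(d,N)$ solutions, which yields $u_d(\pi^k)=\gcd(d,\varphi(\pi^k))$. When $\pi=2$ and $k\ge 3$, one uses the classical decomposition $\left(\Z/\!\raisebox{-.65ex}{\ensuremath{2^k\Z}}\right)^{\times}\cong \Z/\!\raisebox{-.65ex}{\ensuremath{2\Z}}\times \Z/\!\raisebox{-.65ex}{\ensuremath{2^{k-2}\Z}}$, so that the number of solutions of $x^d=1$ is $\gcd(d,2)\cdot\gcd(d,2^{k-2})$. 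A short case check on the $2$-adic valuation of $d$ shows this equals $\gcd(d,\varphi(2^k))=\gcd(d,2^{k-1})$ when $d$ is odd, and equals $\gcd(2d,2^{k-1})=\gcd(2d,\varphi(2^k))$ when $d$ is even (the extra factor of $2$ comes precisely from the non-cyclic $\Z/\!\raisebox{-.65ex}{\ensuremath{2\Z}}$ factor), which is the dichotomy in the statement.

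The only genuinely delicate point is the $\pi=2$, $k\ge 3$, $d$ even case, where one must verify carefully that $\gcd(2,d)\cdot\gcd(2^{k-2},d)=\gcd(2^{k-1},2d)$; this is an elementary but slightly fiddly comparison of $2$-adic valuations (writing $v_2(d)=s\ge 1$, the left side is $2\cdot 2^{\min(s,k-2)}$ and the right side is $2^{\min(k-1,s+1)}$, and these agree). Everything else is bookkeeping: assembling the prime-power formulae into the multiplicative statement, and noting that the formula $r_d=\varphi/u_d$ is manifestly consistent with $r_d(1)=u_d(1)=1$. I do not expect any real obstacle beyond keeping the $2$-adic case analysis straight.
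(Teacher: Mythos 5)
Your proof is correct. Note that the paper itself gives no argument for Proposition~\ref{decompte}: it simply refers the reader to \S 2 of the cited work of Korobov, so there is nothing in the text to compare your proof against line by line. Your route --- the first isomorphism theorem applied to the $d$-th power endomorphism of $(\Z/n\Z)^{\times}$ to obtain $r_d(n)\,u_d(n)=\varphi(n)$, followed by the structure of the unit group (cyclic for odd $\pi$ and for moduli $2$ and $4$; isomorphic to $\Z/2\Z\times\Z/2^{k-2}\Z$ for $2^k$ with $k\ge 3$) combined with the fact that $x^d=e$ has exactly $\gcd(d,N)$ solutions in a cyclic group of order $N$ --- is the standard one, and your $2$-adic bookkeeping checks out: for $v_2(d)=s\ge 1$ one indeed has $2\cdot 2^{\min(s,k-2)}=2^{\min(s+1,k-1)}=\gcd(2d,2^{k-1})$, while for odd $d$ both sides are $1$, matching the dichotomy in the statement. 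One small point of exposition: to deduce multiplicativity of $r_d$ from that of $\varphi$ and $u_d$ you need the identity $r_d(n)=\varphi(n)/u_d(n)$ for \emph{all} $n$, not merely for prime powers (computing a function at prime powers does not by itself show it is multiplicative); but your kernel--image argument applies verbatim to an arbitrary modulus, so this costs nothing. Alternatively, multiplicativity of $r_d$ follows directly from the Chinese Remainder Theorem, since the isomorphism $(\Z/mn\Z)^{\times}\cong(\Z/m\Z)^{\times}\times(\Z/n\Z)^{\times}$ for coprime $m,n$ carries $d$-th powers to pairs of $d$-th powers.
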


\begin{proof}
See \S 2 in~\cite{koro}.
\end{proof}

\subsection{Dirichlet characters and the P\'olya--Vinogradov inequality}\label{charact}

Let $G$ be a finite abelian group, written multiplicatively and with identity $e$. A \textit{character} $\chi$ over $G$ is a multiplicative homomorphism from $G$ into the multiplicative group of complex numbers. The image of $\chi$ is contained in the group of $|G|^{th}$ roots of unity.

It is readily seen that the set of characters over $G$ form a group, called the dual group of $G$ and written $\hat{G}$. Its unit $\chi_0$ is the \textit{principal} (or \textit{trivial}) \textit{character}, which maps everything in $G$ to unity.

The following is well--known (see~\cite{mendes}, chapter 7)~:

\begin{thm}\label{basecaract}
\begin{itemize}
\item[i)] There are exactly $|G|$ characters over $G$.
\item[ii)] For any $g\neq e$, $$\sum_{\chi \in \hat{G}} \chi(g) =0.$$
\item[iii)] For any non-principal character $\chi$, $$\sum_{g \in G} \chi(g) =0.$$
\end{itemize}
\end{thm}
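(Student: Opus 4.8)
The plan is to derive all three parts from the structure theorem for finite abelian groups, the two orthogonality relations ultimately following from one and the same averaging (reindexing) argument.

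First I would settle (i) by reduction to cyclic groups. Writing $G \cong \Z/n_1\Z \times \cdots \times \Z/n_r\Z$ with $n_1\cdots n_r = |G|$, a character of a cyclic group $\Z/n\Z$ is uniquely determined by the image of a chosen generator, and that image may be any of the $n$ complex $n$-th roots of unity (any such choice does extend to a genuine homomorphism, the generator having order exactly $n$); hence $\bigl|\widehat{\Z/n\Z}\bigr| = n$. Restriction to the two factors gives a natural isomorphism $\widehat{G_1\times G_2}\cong\widehat{G_1}\times\widehat{G_2}$, since giving a homomorphism out of $G_1\times G_2$ into an abelian group is the same as giving a pair of homomorphisms, one out of $G_1$ and one out of $G_2$. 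An induction on $r$ then yields $|\hat G| = n_1\cdots n_r = |G|$.

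Next I would isolate the one point where the structure theorem is genuinely used, namely the \emph{separation property}: for every $g\neq e$ there is a character $\psi$ with $\psi(g)\neq 1$. Transporting $g$ through the isomorphism above to a tuple $(g_1,\dots,g_r)$ with some $g_j\neq 0$ in $\Z/n_j\Z$, I would take the character that is trivial on every factor except the $j$-th and that sends a generator of $\Z/n_j\Z$ to a primitive $n_j$-th root of unity; it maps $g$ to a value other than $1$. I expect this to be the main (and essentially the only non-formal) obstacle; everything else is bookkeeping.

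Finally, (iii) and (ii) follow from the same trick. For (iii), fix a non-principal $\chi$ and pick $h\in G$ with $\chi(h)\neq 1$; since $g\mapsto hg$ permutes $G$, one gets $\chi(h)\sum_{g\in G}\chi(g)=\sum_{g\in G}\chi(hg)=\sum_{g\in G}\chi(g)$, so $(\chi(h)-1)\sum_{g\in G}\chi(g)=0$ and the sum vanishes. For (ii), fix $g\neq e$, use the separation property to choose $\psi$ with $\psi(g)\neq 1$, and note that $\chi\mapsto\psi\chi$ permutes $\hat G$; then $\psi(g)\sum_{\chi\in\hat G}\chi(g)=\sum_{\chi\in\hat G}(\psi\chi)(g)=\sum_{\chi\in\hat G}\chi(g)$, hence $(\psi(g)-1)\sum_{\chi\in\hat G}\chi(g)=0$, so this sum vanishes as well.
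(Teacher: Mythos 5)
Your proof is correct and complete: part (i) via the structure theorem and the duality $\widehat{G_1\times G_2}\cong\widehat{G_1}\times\widehat{G_2}$, the separation property as the one non-formal ingredient, and the two orthogonality relations by the standard translation/twisting argument. The paper offers no proof of its own here --- it simply cites a textbook (chapter 7 of the reference on characters) --- so there is nothing to compare against; your argument is precisely the standard one found in such references, with the only point deserving care (that a character of $\Z/n\Z$ may send a generator to \emph{any} $n$-th root of unity, and that some $g_j$ is genuinely nonzero so the chosen $\psi$ separates $g$ from $e$) correctly handled.
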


If $n>1$ is an integer, consider the group $G = \left(\Z/\!\raisebox{-.65ex}{\ensuremath{n\Z}} \right)^{\times}$. A character $\chi$ over $G$ may be extended to all integers by setting $\chi(m) = \chi(m \imod n)$ if $\gcd(n, m) = 1$ and $\chi(m) = 0$ if $\gcd(n, m) > 1$. Such a function is called a \textit{Dirichlet character to the modulus}
$n$ and shall still be denoted by $\chi$.

In what follows, an upper bound on the sum of such characters over large intervals shall be needed. A fundamental improvement on the trivial estimate given by the triangle inequality is the P\'olya--Vinogradov inequality (see~\cite{mendes}, chapter 9)~:

\begin{thm}[P\'olya \& Vinogradov, 1918]\label{povi}
For any non principal Dirichlet characters $\chi$ over $\left(\Z/\!\raisebox{-.65ex}{\ensuremath{n\Z}} \right)^{\times}$ ($n>1$) and any integer $h$, the following holds~: $$\left|\sum_{k=1}^{h} \chi(k) \right| \le 2\sqrt{n}\log n.$$
\end{thm}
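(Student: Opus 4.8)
The plan is to reduce to the case of a \emph{primitive} character and then run the classical Gauss sum argument. First I would note that $\sum_{k=1}^{h}\chi(k)$ depends only on $h$ modulo $n$, since by Theorem~\ref{basecaract}~iii) the sum over a full period vanishes; so one may assume $1\le h\le n$. Next, let $\chi^{\ast}$ be the primitive character modulo $d$ inducing $\chi$: as $\chi$ is non-principal, $d>1$ and $d\mid n$. Writing $R$ for the product of the primes dividing $n$ but not $d$, the coprimality condition concealed in the relation $\chi(k)=\chi^{\ast}(k)\,\mathbf 1_{\gcd(k,n)=1}$ can be detached by M\"obius inversion, giving
\begin{align*}
\sum_{k=1}^{h}\chi(k)=\sum_{e\mid R}\mu(e)\,\chi^{\ast}(e)\sum_{j=1}^{\lfloor h/e\rfloor}\chi^{\ast}(j),
\end{align*}
so matters reduce to an estimate for incomplete sums of the primitive character $\chi^{\ast}$ modulo $d$.

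For the primitive case I would use the Gauss sum $\tau(\chi^{\ast})=\sum_{m\bmod d}\chi^{\ast}(m)e^{2\pi i m/d}$. The facts doing the real work are that $|\tau(\chi^{\ast})|=\sqrt d$ and that $\sum_{m\bmod d}\chi^{\ast}(m)e^{2\pi i am/d}=\overline{\chi^{\ast}(a)}\,\tau(\chi^{\ast})$ for every integer $a$. Fourier inversion on $\Z/d\Z$ then writes $\chi^{\ast}(j)$ as a linear combination of the additive characters $j\mapsto e^{2\pi i aj/d}$ ($1\le a\le d-1$) with coefficients of modulus $d^{-1/2}$, so that
\begin{align*}
\left|\sum_{j=1}^{N}\chi^{\ast}(j)\right|\le\frac{1}{\sqrt d}\sum_{a=1}^{d-1}\left|\sum_{j=1}^{N}e^{2\pi i aj/d}\right|\le\frac{1}{\sqrt d}\sum_{a=1}^{d-1}\frac{d}{2\min(a,d-a)}\le\sqrt d\,\log d,
\end{align*}
uniformly in $N$; this is exactly the P\'olya--Vinogradov bound for primitive characters recorded in \cite{mendes}, chapter~9.

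It remains to combine the two steps. The squarefree divisors $e$ of $R$ number $2^{\omega(R)}$, and since the product of the first $r$ primes is at least $4^{\,r-1}$ one has $2^{\omega(R)}\le 2\sqrt R\le 2\sqrt{n/d}$ (alternatively, $2^{\omega(R)}$ is of lower order than any power of $n/d$, cf. Lemma~\ref{tau}). Inserting the primitive estimate and $|\mu(e)\chi^{\ast}(e)|\le 1$ into the M\"obius expansion then yields
\begin{align*}
\left|\sum_{k=1}^{h}\chi(k)\right|\le 2^{\omega(R)}\sqrt d\,\log d\le 2\sqrt{n/d}\cdot\sqrt d\,\log n=2\sqrt n\,\log n,
\end{align*}
which is the asserted inequality. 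The one genuinely non-formal ingredient is the evaluation $|\tau(\chi^{\ast})|=\sqrt d$ for primitive characters: this is where square-root cancellation actually enters, every other step being the triangle inequality. I therefore expect that identity --- together with the bookkeeping needed to keep the imprimitive-to-primitive reduction from costing more than the stated factor $2$ --- to be the main point to get right.
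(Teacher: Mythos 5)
The paper offers no proof of Theorem~\ref{povi} beyond the citation of \cite{mendes}, chapter~9, so there is nothing internal to compare against; your argument is the standard one (periodicity plus vanishing of the full-period sum, reduction to the primitive character inducing $\chi$ by M\"obius inversion, then the Gauss-sum expansion of $\chi^{\ast}$ into additive characters with coefficients of modulus $d^{-1/2}$ and the geometric-series bound), and it is correct, with the bookkeeping $2^{\omega(R)}\le 2\sqrt{R}\le 2\sqrt{n/d}$ correctly producing the stated constant $2$ in the imprimitive case. This is in substance the same proof as in the reference the paper points to, so nothing further is needed.
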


\begin{rem}
When $\chi$ is a so--called primitive character (which is the case if $n$ is prime), the multiplicative constant 2 in the above may be replaced by 1. This refinement shall not be needed.
\end{rem}

\section{The proof of the main result}\label{sec3}

The first part of this section shall be devoted to the proof of Corollary~\ref{thminteret}~: all the tools introduced in the previous section shall be used there. In the second subsection, all the modifications needed to prove Theorem~\ref{extensiondf} are given. 

\subsection{The proof of Corollary~\ref{thminteret}}

The proof of Corollary~\ref{thminteret} is a generalization of the proof of the theorem of Duffin and Schaeffer~\cite{dufsch}. All the new notation to be used is summarized in Figure~\ref{table1}.

\paragraph{}
\begin{figure}[!h]
\centering
\begin{tabular}{||c||c||c||}
\hline 
\hline
Notation & Parameters & Definition \\
\hline
\hline
$\varphi_{\mu}(n)$ & $n\ge 2$, $\mu>0$ & $\mathrm{Card}\left\{ l\in \llbracket 1 , \mu n \rrbracket \; : \; \gcd(l,n) = 1\right\}$ \\
\hline
\hline
$G_n$ & $n\ge 2$ integer & Any subgroup of $\left(\Z/\!\raisebox{-.65ex}{\ensuremath{n\Z}}\right)^{\times}$ \\
\hline
\multirow{3}{*}{$G_{n}^{(d)}$} & \multirow{3}{*}{$d\ge 1$} & Group of $d^{th}$ powers in a reduced \\
&  & system of residues modulo \\
& & a fixed integer $n\ge 2$\\
\hline 
\multirow{2}{*}{$aG_n$} & $a \in \left(\Z/\!\raisebox{-.65ex}{\ensuremath{n\Z}}\right)^{\times}$  & Coset of $a$ in the quotient of $\left(\Z/\!\raisebox{-.65ex}{\ensuremath{n\Z}}\right)^{\times}$\\
& $n\ge 2$ & by $G_n$, i.e. $aG_n = \left\{al \; : \; l\in G_n \right\} $ \\
\hline
\hline
$\Psi_X\left(aG_n\right)$ & $X>0$ & $\mathrm{Card}\left\{l\in \llbracket 1 , X \rrbracket \; : \; l\in aG_n \right\} $ \\
\hline
\multirow{2}{*}{$d_n\left(G_n\right)$} & \multirow{2}{*}{$n\ge 2$} & Index of $G_n$ in  $\left(\Z/\!\raisebox{-.65ex}{\ensuremath{n\Z}}\right)^{\times}$, i.e. \\
& & $d_n\left(G_n\right) = \varphi (n)/\Psi_n\left(G_n\right)$ \\
\hline
\hline
\end{tabular}
\caption{Some additional notation}
\label{table1}
\end{figure}

\paragraph{}
The key--step to the proof of the theorem of Duffin and Schaeffer (Theorem~\ref{dufsch}) is the study of the regularity of the distribution of the numbers less than a given positive integer and relatively prime to this integer. The following is well--known and strengthens their result in~\cite{dufsch} (Lemma~III)~: the proof, which is part of the folklore, is only given to introduce the main idea of the proof of Corollary~\ref{thminteret}.

\begin{lem}\label{repartitionprem}
Let $\mu$ be a positive real number and let $n\ge 2$ be an integer. Let $\varphi_{\mu}(n)$ denote the number of positive integers which are equal to or less than $\mu n$ and relatively prime to $n$. 

Then for any $\epsilon >0$, $$\varphi_{\mu}(n) = \varphi (n) \left(\mu + o\left(\frac{1}{n^{1-\epsilon}}\right) \right).$$ 
\end{lem}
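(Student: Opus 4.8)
The plan is to count $\varphi_\mu(n)$ by inclusion–exclusion over the squarefree divisors of $n$ (equivalently, via the Möbius function restricted to divisors of the radical of $n$). Writing $\varphi_\mu(n) = \#\{\,1 \le l \le \mu n : \gcd(l,n)=1\,\} = \sum_{d \mid n} \mu(d) \lfloor \mu n / d \rfloor$, I would replace each floor by $\mu n / d + O(1)$, so that the main term becomes $\mu n \sum_{d\mid n} \mu(d)/d = \mu\, \varphi(n)$, and the error term is at most $\sum_{d \mid n} |\mu(d)| = 2^{\omega(n)}$ in absolute value.

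The remaining task is purely to control $2^{\omega(n)}$ against the claimed bound $\varphi(n)\, o(n^{\epsilon-1})$, i.e. to show $2^{\omega(n)} = o\!\left(\varphi(n)/n^{1-\epsilon}\right)$ for every $\epsilon>0$. Here I would invoke Lemma~\ref{tau}: for fixed $m$ one has $m^{\omega(n)} = o(n^{\epsilon'})$ for any $\epsilon'>0$, so in particular $2^{\omega(n)} = o(n^{\epsilon/2})$. Combined with the classical lower bound $\varphi(n) \gg n/\log\log n$ (or even the cruder $\varphi(n) \gg n^{1-\epsilon/2}$, which itself follows from Lemma~\ref{tau} applied to $n/\varphi(n) \le \prod_{\pi \mid n}\bigl(1-1/\pi\bigr)^{-1} \le \tau(n)$-type estimates), we get $2^{\omega(n)} / \varphi(n) = o(n^{\epsilon/2}) \cdot O(n^{\epsilon/2 - 1}) = o(n^{\epsilon - 1})$, which is exactly the assertion $\varphi_\mu(n) = \varphi(n)\bigl(\mu + o(n^{\epsilon-1})\bigr)$.

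One small point of care: the error constant in $\lfloor \mu n/d\rfloor = \mu n/d + O(1)$ is genuinely uniform (the implied constant is $1$), so summing over the $2^{\omega(n)}$ divisors is legitimate and introduces no hidden dependence on $\mu$. I would state the final bound with the $o(\cdot)$ uniform in $\mu$ on bounded sets, since that is what is needed in the sequel, though for the bare statement as written nothing more is required.

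I do not expect a serious obstacle here: the only mildly delicate step is packaging the two ingredients from Lemma~\ref{tau} (namely $2^{\omega(n)} = o(n^{\epsilon})$ and a lower bound on $\varphi(n)$ of the form $n^{1-\epsilon}$) so that their combination beats $n^{1-\epsilon}$ with a \emph{single} $\epsilon$; this is just a matter of splitting the given $\epsilon$ into two halves as above.
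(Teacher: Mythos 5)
Your proof is correct and follows essentially the same route as the paper's: an inclusion--exclusion (sieve) over the squarefree divisors of $n$, a remainder bounded by the number of such divisors, and then the combination of $m^{\omega(n)}=o(n^{\epsilon})$ from Lemma~\ref{tau} with the lower bound $\varphi(n)\ge n/2^{\omega(n)}$ (your observation $n/\varphi(n)\le 2^{\omega(n)}$), splitting $\epsilon$ in two. The only cosmetic difference is that the paper bounds the remainder by $\tau(n)$ rather than the slightly sharper $2^{\omega(n)}$.
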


\begin{proof}
Let $n= \prod_{i=1}^r \pi_{i}^{\alpha_i}$ be the prime factor decomposition of the integer $n\ge 2$. The sieve method provides an exact formula for $\varphi_{\mu}(n)$~: $$\varphi_{\mu}(n) \, = \,\left\lfloor \mu n\right\rfloor -\sum_{1 \le i \le r} \left\lfloor \frac{\mu n}{\pi_i} \right\rfloor + \sum_{\underset{i \neq j}{1 \le i,j \le r}} \left\lfloor \frac{\mu n}{\pi_i \pi_j} \right\rfloor  - \dots$$
Removing the integer part symbols from the above, it is not difficult to see that $$\varphi_{\mu}(n) = \mu\varphi (n) + R,$$ where the remainder $R$ satisfies $|R| \le \tau (n)$. 

Lemmas~\ref{tau} along with the inequality $\varphi (n) \ge n/2^{\omega(n)}$ valid for all positive integers imply that $$\frac{\tau(n)}{\varphi (n)} \le \frac{o\left(n^{\epsilon}\right)}{n}2^{\omega(n)} = \frac{o\left(n^{2\epsilon}\right)}{n} = o\left(\frac{1}{n^{1-2\epsilon}} \right)$$ for any $\epsilon >0$.
\end{proof}

Duffin and Schaeffer provide an error term of the form $O \left(n^{-1/2} \right)$ in Lemma~\ref{repartitionprem}, where the implied constant is absolute. In fact, even such an estimate is too accurate in the sense that their method only requires the error term to tend to zero uniformly in $\mu$. This fact shall be used to prove Corollary~\ref{thminteret}. The following theorem deals with the regularity of the distribution of the elements of a given subgroup of $\left(\Z/\!\raisebox{-.65ex}{\ensuremath{n\Z}}\right)^{\times}$ (where $n\ge 2$ is an integer) and is the key--step to the generalization of the result of Duffin and Schaeffer.

\begin{thm}\label{regularitessgrpe}
Let $\mu$ be a positive real number, $n\ge 2$ be an integer and $a\in \left(\Z/\!\raisebox{-.65ex}{\ensuremath{n\Z}}\right)^{\times}$. Let $G_n$ be a subgroup of $\left(\Z/\!\raisebox{-.65ex}{\ensuremath{n\Z}}\right)^{\times}$. Denote by $\Psi_{n} (G_n)$ the cardinality of $G_n$ (which is also the cardinality of $aG_n$) and by $d_n (G_n)$ the index of $G_n$ in  $\left(\Z/\!\raisebox{-.65ex}{\ensuremath{n\Z}}\right)^{\times}$, that is, $$d_n (G_n) = \frac{\left| \left(\Z/\!\raisebox{-.65ex}{\ensuremath{n\Z}}\right)^{\times} \right|}{\left| G_n \right|} = \frac{\varphi(n)}{\Psi_n (G_n)}\cdot$$ Finally, for a real number $\mu >0$ and an integer $n\ge 1$, let $\Psi_{\mu n}\left(aG_n\right)$ denote the number of positive integers $k$ less than or equal to $\mu n$ such that $k\in aG_n$.

Then for any $\epsilon >0$, $$\Psi_{\mu n}\left(aG_n\right) = \Psi_n(G_n) \left(\mu + o\left(\frac{d_n(G_n)}{n^{1/2-\epsilon}}\right) \right).$$
\end{thm}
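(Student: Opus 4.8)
The plan is to detect membership in the coset $aG_n$ by means of Dirichlet characters, which will split $\Psi_{\mu n}(aG_n)$ into a main term handled by Lemma~\ref{repartitionprem} and an error term handled by the P\'olya--Vinogradov inequality. First, let $q\colon(\Z/n\Z)^{\times}\to Q:=(\Z/n\Z)^{\times}/G_n$ be the canonical projection, so that $|Q|=d_n(G_n)$. Composition with $q$ identifies the dual group $\hat Q$ with the set $H$ of Dirichlet characters modulo $n$ that are trivial on $G_n$, whence $|H|=d_n(G_n)$ by Theorem~\ref{basecaract}~i). The orthogonality relation Theorem~\ref{basecaract}~ii), applied inside $Q$, then gives for every integer $k$ (both sides vanishing when $\gcd(k,n)>1$)
\[
\mathbf 1_{\{k\in aG_n\}}=\frac{1}{d_n(G_n)}\sum_{\chi\in H}\overline{\chi(a)}\,\chi(k),
\]
and summing over $1\le k\le\mu n$ yields $\Psi_{\mu n}(aG_n)=d_n(G_n)^{-1}\sum_{\chi\in H}\overline{\chi(a)}\sum_{k=1}^{\lfloor\mu n\rfloor}\chi(k)$.

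Next I would isolate the contribution of the principal character $\chi_0\in H$: since $\chi_0(a)=1$ and $\sum_{k=1}^{\lfloor\mu n\rfloor}\chi_0(k)=\varphi_\mu(n)$, it equals $\varphi_\mu(n)/d_n(G_n)$, which by Lemma~\ref{repartitionprem} (whose error term is uniform in $\mu$, a point that will be needed in the intended application) and the identity $\varphi(n)/d_n(G_n)=\Psi_n(G_n)$ equals $\mu\,\Psi_n(G_n)+\Psi_n(G_n)\,o(n^{-1+\epsilon})$. Each of the remaining $d_n(G_n)-1$ characters of $H$ is a non-principal Dirichlet character modulo $n$, so Theorem~\ref{povi} bounds $\bigl|\sum_{k=1}^{\lfloor\mu n\rfloor}\chi(k)\bigr|$ by $2\sqrt n\log n$, and since $|\overline{\chi(a)}|\le 1$ the total non-principal contribution is $O(\sqrt n\log n)$. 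Combining,
\[
\Psi_{\mu n}(aG_n)-\mu\,\Psi_n(G_n)=\Psi_n(G_n)\,o\!\left(n^{-1+\epsilon}\right)+O\!\left(\sqrt n\log n\right).
\]

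Finally, it remains to check that both error terms are $o\!\left(\varphi(n)/n^{1/2-\epsilon}\right)$, which coincides with $\Psi_n(G_n)\,o\!\left(d_n(G_n)/n^{1/2-\epsilon}\right)$ by the identity $\Psi_n(G_n)\,d_n(G_n)=\varphi(n)$. The first follows at once from $\Psi_n(G_n)\le\varphi(n)$; for the second, $\varphi(n)\ge n/2^{\omega(n)}$ together with $2^{\omega(n)}=o(n^{\epsilon/2})$ (Lemma~\ref{tau}) gives $\varphi(n)\ge n^{1-\epsilon/2}$ for large $n$, hence $\varphi(n)n^{-1/2+\epsilon}\ge n^{1/2+\epsilon/2}\gg\sqrt n\log n$. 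The only delicate point in the whole argument is this last bit of bookkeeping: the P\'olya--Vinogradov estimate carries no saving in $d_n(G_n)$, so the factor $d_n(G_n)$ in the statement has to be absorbed through $\Psi_n(G_n)\,d_n(G_n)=\varphi(n)\asymp n$, which is precisely why the stated error term has the asserted shape; no cancellation beyond P\'olya--Vinogradov is needed, and the uniformity in $\mu$ comes for free from both the sieve bound and P\'olya--Vinogradov.
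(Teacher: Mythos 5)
Your proof is correct and follows essentially the same route as the paper: detecting the coset $aG_n$ via the characters of $(\Z/n\Z)^{\times}/G_n$ lifted to Dirichlet characters modulo $n$, extracting the main term from the principal character via Lemma~\ref{repartitionprem}, bounding the $d_n(G_n)-1$ non-principal sums by P\'olya--Vinogradov, and absorbing the error through $\Psi_n(G_n)\,d_n(G_n)=\varphi(n)\ge n/2^{\omega(n)}$ with $2^{\omega(n)}=o(n^{\epsilon})$. The only cosmetic difference is that you write $\overline{\chi(a)}$ where the paper evaluates $\chi(\alpha k)$ with $\alpha=a^{-1}$, which is the same thing.
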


\begin{proof}
The proof uses the Dirichlet characters introduced in subsection~\ref{charact} and some ideas which probably date back to the works of Erd\H{o}s and Davenport~\cite{erddav} on character sums. 

Let $H_n$ be the quotient group of $\left(\Z/\!\raisebox{-.65ex}{\ensuremath{n\Z}}\right)^{\times}$ by $G_n$. Any character $\chi$ over $H_n$ may be extended to $G_n$ by composing with the canonical homomorphism from $G_n$ to $H_n$. Such a character shall still be denoted by $\chi$. Let $\hat{G}_{H_n}$ be the set of all characters over $G_n$ arising from a character over $H_n$~: it is readily seen that $\hat{G}_{H_n}$ is a subgroup of $\hat{G_n}$ of cardinality $|\hat{H}_n|$ (here the notation of subsection~\ref{charact} is kept). 

Let $\alpha\in \left(\Z/\!\raisebox{-.65ex}{\ensuremath{n\Z}}\right)^{\times}$ be the multiplicative inverse of $a\in \left(\Z/\!\raisebox{-.65ex}{\ensuremath{n\Z}}\right)^{\times}$. By Theorem~\ref{basecaract}, $|\hat{H}_n| = d_n(G_n)$ and the same theorem implies that $$\Psi_{\mu n}\left(aG_n\right) = \frac{1}{d_n(G_n)} \sum_{ k \in \llbracket 1 , \mu n \rrbracket} \sum_{\chi \in \hat{G}_{H_n}} \chi \left(\alpha k \right).$$ On inverting the order of summation, two contributions from the sum may be distinguished~:
\begin{itemize}
\item One comes from the principal character and equals $\textrm{Card}\left(\llbracket 1 , \mu n \rrbracket \cap  \left(\Z/\!\raisebox{-.65ex}{\ensuremath{n\Z}}\right)^{\times} \right)$. Now, from Lemma~\ref{repartitionprem}, $$\textrm{Card}\left(\llbracket 1 , \mu n \rrbracket \cap  \left(\Z/\!\raisebox{-.65ex}{\ensuremath{n\Z}}\right)^{\times} \right) = \varphi_{\mu}(n) = \varphi (n) \left(\mu + o\left(\frac{1}{n^{1-\epsilon}}\right) \right)$$ for any $\epsilon >0$.

\item The other comes from the $(d_n(G_n) - 1)$ non--trivial characters and, by the P\'olya--Vinogradov inequality (Theorem~\ref{povi}), each of them is bounded above in absolute value by $2\sqrt{n}\log n$.
\end{itemize}

Therefore, for any $\epsilon >0$, $$\Psi_{ \mu n}\left(aG_n\right) = \frac{\varphi (n)}{d_n(G_n)} \left( \mu +  o\left(\frac{1}{n^{1-\epsilon}}\right)\right) + \frac{d_n(G_n)-1}{d_n(G_n)} R_n(\mu),$$ where the remainder $R_n(\mu)$ satisfies $\left|R_n(\mu)\right| \le 2\sqrt{n}\log n$. Bearing in mind that $d_n(G_n) = \varphi (n) / \Psi_n (G_n)$ and that $\varphi(n) \ge n/2^{\omega(n)}$, Lemma~\ref{tau} leads to the inequality $$\left|\frac{R_n(\mu)}{\varphi(n)} \right|\le \frac{2\sqrt{n}2^{\omega (n)} \log n }{n} = o\left(\frac{1}{n^{1/2-\epsilon}} \right)$$ for any $\epsilon >0$. This concludes the proof.
\end{proof}

The next result makes the link between Theorem~\ref{regularitessgrpe} and Corollary~\ref{thminteret} giving the repartition of the $d^{th}$ powers in a reduced system of residues modulo an integer. The notation of Theorem~\ref{regularitessgrpe} is kept.

\begin{coro}\label{regularitepuissances}
Let $n\ge 2$ and $a\ge 1$ be two coprime integers. Denote by $G_{n}^{(d)}$ the group of $d^{th}$ power residues in a reduced system of residues modulo~$n$. 

Then for all $\epsilon >0$, 
$$\Psi_{\mu n}\left(aG_{n}^{(d)}\right) = \Psi_n\left(G_{n}^{(d)}\right) \left(\mu + o\left(\frac{1}{n^{1/2-\epsilon}}\right) \right),$$ where $\Psi_n\left(G_{n}^{(d)}\right) = r_d(n) = \varphi(n) / u_d(n)$ as defined in Proposition~\ref{decompte}.
\end{coro}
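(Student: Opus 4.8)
The plan is to obtain Corollary~\ref{regularitepuissances} as a direct specialisation of Theorem~\ref{regularitessgrpe} to the subgroup $G_n = G_{n}^{(d)}$ of $\left(\Z/n\Z\right)^{\times}$, the only point requiring an argument being the estimation of the index $d_n\!\left(G_{n}^{(d)}\right)$ that governs the size of the error term in that theorem. First I would use Proposition~\ref{decompte} to record that $\Psi_n\!\left(G_{n}^{(d)}\right) = r_d(n) = \varphi(n)/u_d(n)$, whence, directly from the definition of the index, $$d_n\!\left(G_{n}^{(d)}\right) = \frac{\varphi(n)}{\Psi_n\!\left(G_{n}^{(d)}\right)} = u_d(n).$$ Applying Theorem~\ref{regularitessgrpe} with this choice of subgroup then yields, for every $\eta>0$ and uniformly in $\mu>0$, $$\Psi_{\mu n}\!\left(aG_{n}^{(d)}\right) = \Psi_n\!\left(G_{n}^{(d)}\right)\left(\mu + o\!\left(\frac{u_d(n)}{n^{1/2-\eta}}\right)\right).$$

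It then remains to check that $u_d(n)$ is smaller than any fixed positive power of $n$. For this I would combine the multiplicativity of $u_d$ (Proposition~\ref{decompte}, see also Remark~\ref{multirdq}) with the explicit values of $u_d$ at prime powers given in the same proposition: in every case $u_d\!\left(\pi^k\right)$ is either $\gcd\!\left(d,\varphi(\pi^k)\right)$ or $\gcd\!\left(2d,\varphi(\pi^k)\right)$, and so is at most $2d$. Taking the product over the distinct prime factors of $n$ gives $u_d(n) \le (2d)^{\omega(n)}$, and since $d$ is fixed, the second part of Lemma~\ref{tau} shows that $(2d)^{\omega(n)} = o\!\left(n^{\eta}\right)$ for every $\eta>0$.

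To conclude, given $\epsilon>0$ I would apply the displayed identity with $\eta = \epsilon/2$ and then insert the bound $u_d(n) = o\!\left(n^{\epsilon/2}\right)$, which gives $u_d(n)/n^{1/2-\epsilon/2} = o\!\left(n^{\epsilon - 1/2}\right) = o\!\left(1/n^{1/2-\epsilon}\right)$; since an $o$ of a quantity that is itself $o\!\left(1/n^{1/2-\epsilon}\right)$ is again $o\!\left(1/n^{1/2-\epsilon}\right)$, this is exactly the assertion of the corollary, the implied constants depending only on the fixed integers $a$ and $d$. I do not anticipate any real difficulty here: the corollary is little more than a reformulation of Theorem~\ref{regularitessgrpe} for power residues, and the sole thing that could spoil the error term --- too rapid a growth of the index of $G_{n}^{(d)}$ --- is excluded by the elementary inequality $u_d(n) \le (2d)^{\omega(n)}$ together with the standard estimate $\omega(n) = o(\log n)$.
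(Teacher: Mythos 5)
Your proposal is correct and follows exactly the paper's own route: specialise Theorem~\ref{regularitessgrpe} to $G_n = G_{n}^{(d)}$, identify the index $d_n\bigl(G_{n}^{(d)}\bigr)$ with $u_d(n)$, and then absorb it into the error term via the bound $u_d(n)\le (2d)^{\omega(n)} = o\bigl(n^{\epsilon}\bigr)$ coming from multiplicativity, Proposition~\ref{decompte} and Lemma~\ref{tau}. The only difference is that you spell out the $\epsilon/2$ bookkeeping, which the paper leaves implicit.
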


\begin{proof}
Keeping the notation of Theorem~\ref{regularitessgrpe}, first notice that $d_n\left(G_{n}^{(d)}\right) = u_d(n)$.  Now, since the arithmetical function $u_d(n)$ is multiplicative (see Remark~\ref{multirdq}), Proposition~\ref{decompte} and Lemma~\ref{tau} imply that $$d_n\left(G_{n}^{(d)}\right) = u_d(n) \le (2d)^{\omega(n)} = o\left(n^{\epsilon} \right)$$ for any $\epsilon >0$. The result then follows from Theorem~\ref{regularitessgrpe}.
\end{proof}

To prove Corollary~\ref{thminteret}, the following notation is convenient.

\begin{nota}
For any real number $x \in [0 , 1/2)$ and any integer $k\ge 1$, let $E_k^{x}$ denote the collection of intervals of the form $$\left(\frac{p}{q_k^d}-\frac{x}{q_k^{d}} \, , \, \frac{x}{q_k^{d}}+\frac{p}{q_k^d} \right)$$ where $0 < p < q_k^d$ is an integer relatively prime to $q_k$ and satisfying $p \equiv a b^d \imod{q_k}$ for an integer $b$ prime to $q_k$ (with the notation of Corollary~\ref{regularitepuissances}, this amounts to claiming that $p \in \llbracket 0 , q_k^d \rrbracket$ and that $p\in aG_{q_k}^{(d)}$ ).
Here and in what follows, the integer $a$ is fixed and assumed to be relatively prime to $q_k$ for all $k\ge 1$.

For simplicity, set furthermore $E_k:=E^{\alpha_k}_{k}$ for all integers $k\ge 1$.
\end{nota}

As mentioned in~\cite{sprin} (p.27), it is enough to consider the case where the sequence $\left(\alpha_k\right)_{k\ge 1}$ in Corollary~\ref{thminteret} takes its values in the interval $[0 , 1/2)$. This assumption can be dropped, but this leads to some additional complications which are not of interest.

With the notation of Corollary~\ref{regularitepuissances}, $E_k$ is the set in $(0,1)$ consisting of 
\begin{align}\label{psidqk}
\Psi_{q_k^d}\left(a G_{q_k}^{(d)}\right) = \Psi_{q_k^d}\left(G_{q_k}^{(d)}\right) = \Psi_{q_k}\left(G_{q_k}^{(d)}\right)q_k^{d-1}
\end{align} 
open intervals each of length $2\alpha_k/q_k^d$ with centers at $p/q_k^d$, where $p$ and $q_k$ are integers satisfying the aforementioned constraints ($\Psi_{q_{k}^d}\left(a G_{q_k}^{(d)}\right)$ is the number of integers $p \in \llbracket 0 , q_k^d \rrbracket$ such that $p\in aG_{q_k}^{(d)}$. From the fact that the integer $a$ is coprime with $q_k$, it should be obvious that $\Psi_{q_{k}^d}\left(a G_{q_k}^{(d)}\right) = \Psi_{q_{k}^d}\left(G_{q_k}^{(d)}\right)$).

If $(s, t)$ is some interval in $(0,1)$, an estimate of the measure of the set common to $E_k$ and the interval $(s, t)$ is needed. To that end, notice that, for any integer $n\ge 1$ and any real number $\mu >0$, $\Psi_{\mu n^d}\left(aG_{q_k}^{(d)}\right)$  counts the number of positive integers $p$ less than or equal to $\mu n^d$ such that $p\in aG_{q_k}^{(d)}$. 

Let $k\ge 1$ be an integer. The number of intervals in $E_k$ whose centers lie in $(s, t)$ is exactly $\Psi_{tq_k^d}\left(aG_{q_k}^{(d)}\right) - \Psi_{sq_k^d}\left(aG_{q_k}^{(d)}\right)$. From this it follows that at least $\Psi_{tq_k^d}\left(aG_{q_k}^{(d)}\right) - \Psi_{sq_k^d}\left(aG_{q_k}^{(d)}\right) - 2$ such intervals are entirely contained in $(s, t)$ and at most $\Psi_{tq_k^d}\left(aG_{q_k}^{(d)}\right) - \Psi_{sq_k^d}\left(aG_{q_k}^{(d)}\right) +2$ of them touch $(s,t)$. Thus the measure of the set common to $E_k$ and $(s, t)$ is 
\begin{align}\label{chevauchementE_k}
\frac{2\alpha_k}{q_k^d}\left(\Psi_{tq_k^d}\left( aG_{q_k}^{(d)}\right) - \Psi_{sq_k^d}\left( aG_{q_k}^{(d)}\right) + \theta \right),
\end{align}
where $\left| \theta\right| \le 2$.

However, since for any $\mu >0$, $\left\lfloor \mu q_k^{d-1} \right\rfloor$ is the greatest integer $m$ satisfying $mq_k \le \mu q_k^d$, we get $$\Psi_{\mu q_k^d}\left(aG_{q_k}^{(d)}\right) = \left\lfloor \mu q_k^{d-1} \right\rfloor \Psi_{q_k}\left(G_{q_k}^{(d)}\right) + \textrm{Card} \left\{p \in \llbracket \left\lfloor \mu q_k^{d-1} \right\rfloor q_k \, , \, \mu q_k^d \rrbracket\; :\; p\in aG_{q_k}^{(d)}\right\}.$$ The second term on the right-hand side of this equation is $\Psi_{\nu q_k}\left(aG_{q_k}^{(d)}\right),$ where $$\nu := \frac{\mu q^{d}_k - \left\lfloor \mu q_k^{d-1} \right\rfloor q_k}{q_k} \, \in \, [0, 1).$$ Therefore, from Corollary~\ref{regularitepuissances},
\begin{align*}
\Psi_{\mu q_k^d}\left(aG_{q_k}^{(d)}\right) &=  \left\lfloor \mu q_k^{d-1} \right\rfloor \Psi_{q_k}\left(G_{q_k}^{(d)}\right) + \Psi_{\nu q_k}\left(aG_{q_k}^{(d)}\right) \\
&=  \Psi_{q_k}\left(G_{q_k}^{(d)}\right) \left(\left\lfloor \mu q_k^{d-1} \right\rfloor + \mu q_k^{d-1} - \left\lfloor \mu q_k^{d-1} \right\rfloor + o\left(\frac{1}{q_k^{1/2-\epsilon}} \right) \right) \\
&= \Psi_{q_k^d}\left(G_{q_k}^{(d)}\right)  \left( \mu  + o\left(\frac{1}{q_k^{d-1/2-\epsilon}} \right) \right)
\end{align*}
for any $\epsilon >0$.

Putting this into~(\ref{chevauchementE_k}) and denoting by $\lambda$ the one--dimensional Lebesgue measure, the measure of the set common to $E_k$ and $(s,t)$ is seen to be $$\frac{2\alpha_k}{q_k^d}\Psi_{q_k^d}\left(G_{q_k}^{(d)}\right) \left(t-s + o(1) \right) \, =\, \lambda\left(E_k \right) (t-s) (1+o(1)),$$ where the last $o(1)$ is less than $\left(q_k^{d-1/2-\epsilon}(t-s) \right)^{-1}$ for any $\epsilon >0$.

Thus the following lemma has almost been proven.

\begin{lem}\label{lem58}
Let $A$ be a subset of the unit interval $(0,1)$ consisting of a finite number of intervals.

Then, there exists a constant $c_A >0$ which depends only on the set $A$ such that for any integer $k\ge 1$, $$\lambda\left(A \cap E_k \right) \le \lambda\left(A \right) \lambda\left(E_k \right)\left(1+c_A \,\rho\left(q_k \right) \right),$$ where $$\rho\left(q_k \right)  = o\left(\frac{1}{q_k^{d-1/2-\epsilon}} \right)$$ for any $\epsilon >0.$
\end{lem}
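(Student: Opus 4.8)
The plan is to reduce the claim to the computation already carried out in the paragraphs preceding the lemma, which established the estimate for a \emph{single} interval $(s,t)\subseteq(0,1)$, namely that the measure of $E_k\cap(s,t)$ equals $\lambda(E_k)(t-s)\bigl(1+o(1)\bigr)$ with the $o(1)$ term bounded by $\bigl(q_k^{d-1/2-\epsilon}(t-s)\bigr)^{-1}$ for every $\epsilon>0$. The set $A$ is a finite union of intervals, say $A=\bigcup_{j=1}^{m}(s_j,t_j)$ with the $(s_j,t_j)$ pairwise disjoint; I would first note that, since $E_k$ is itself a disjoint union of open intervals, $\lambda(A\cap E_k)=\sum_{j=1}^{m}\lambda\bigl((s_j,t_j)\cap E_k\bigr)$, so it suffices to sum the single-interval estimate over $j$.

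The key steps, in order, are as follows. First, fix $\epsilon>0$ (to be absorbed later, or simply carried through since the conclusion quantifies over all $\epsilon>0$) and apply the displayed single-interval estimate to each $(s_j,t_j)$, obtaining
\begin{align*}
\lambda\bigl((s_j,t_j)\cap E_k\bigr)\le \lambda(E_k)(t_j-s_j)+\lambda(E_k)(t_j-s_j)\cdot\frac{C}{q_k^{d-1/2-\epsilon}(t_j-s_j)}
\end{align*}
for a constant $C$ coming from the $o(1)$; the $(t_j-s_j)$ factors cancel in the error term, leaving $\lambda(E_k)\cdot C\, q_k^{-(d-1/2-\epsilon)}$ per interval. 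Second, sum over $j=1,\dots,m$: the main terms sum to $\lambda(E_k)\sum_j(t_j-s_j)=\lambda(E_k)\lambda(A)$, while the error terms sum to $m\,\lambda(E_k)\,C\,q_k^{-(d-1/2-\epsilon)}$. Third, set $c_A:=mC/\lambda(A)$ (legitimate since $\lambda(A)>0$ as $A$ is a nonempty finite union of intervals; the degenerate case $\lambda(A)=0$ being trivial) and $\rho(q_k):=q_k^{-(d-1/2-\epsilon)}$, so that the total is bounded by $\lambda(A)\lambda(E_k)\bigl(1+c_A\rho(q_k)\bigr)$, and observe $c_A$ depends only on $A$ (through $m$ and $\lambda(A)$) while $\rho(q_k)=o\bigl(q_k^{-(d-1/2-\epsilon)}\bigr)$ for every $\epsilon>0$, which is exactly the asserted decay.

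The only genuine subtlety — rather than an obstacle — is bookkeeping the constant: one must check that the constant in the single-interval $o(1)$ bound is genuinely absolute (it traces back to the $\theta$ with $|\theta|\le2$ in~(\ref{chevauchementE_k}) together with the implied constant in Corollary~\ref{regularitepuissances}, and the latter is uniform in $\mu$), so that after summing over the $m$ intervals the resulting $c_A$ depends on $A$ only through the \emph{number} of component intervals and $\lambda(A)$, and in particular not on $k$. A second minor point is that the single-interval estimate was derived for intervals strictly inside $(0,1)$; since $A\subseteq(0,1)$ is a finite union of such intervals, this causes no difficulty. No new ideas beyond additivity of Lebesgue measure over the disjoint pieces of $E_k$ and of $A$ are needed, so the proof is genuinely just the remark that "the preceding computation sums up", which is why the text says the lemma ``has almost been proven''.
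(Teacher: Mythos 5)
Your proof is correct and follows the same route as the paper, which simply notes that the single-interval case was established in the preceding computation and that the general case follows by summing over the finitely many components (citing Lemma~IV of Duffin--Schaeffer); you have merely written out that summation explicitly, including the correct bookkeeping that the absolute error per component is uniform and independent of the component's length.
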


\begin{proof}
The lemma has been proven in the case where $A$ is a single interval. The general case follows easily. See Lemma~IV in~\cite{dufsch}.
\end{proof}

All the tools necessary for the proof of Corollary~\ref{thminteret} are now available. In fact, the proof has been reduced to that of the theorem of Duffin and Schaeffer, which may be found in~\cite{dufsch} (p. 248 to 250). In the latter, the reference to Lemma~IV should be replaced by the reference to Lemma~\ref{lem58} in the above and inequalities~(13) should be read as follows~:
\begin{quotation}
\emph{By assumption, there are arbitrarily large integers $n$ and $m$ such that $$\sum_{j=n}^{m}\alpha_j > 1 \; \textrm{ and }\; \sum_{j=n}^{m}\alpha_j s_d\left(q_k^d\right) > \frac{1}{2}c\sum_{j=n}^{m}\alpha_j, $$ where $$s_d\left(q_k^d\right) = \frac{\Psi_{q_k^d}\left(G_{k}^{(d)}\right)}{q_k^d}\cdotp$$}
\end{quotation}
For the latter, see the definitions of $s_d(q)$ in~(\ref{defsdq}), of $\Psi_{q_k}\left(G_{k}^{(d)}\right)$ in Corollary~\ref{regularitepuissances} and of $\Psi_{q_k^d}\left(G_{k}^{(d)}\right)$ in~(\ref{psidqk}).

This concludes the proof of Corollary~\ref{thminteret}.

\subsection{The proof of Theorem~\ref{extensiondf}}

In the course of the proof of Corollary~\ref{thminteret}, the main step was the proof of Theorem~\ref{regularitessgrpe} and the fact that the subgroup $G_{n}^{(d)}$ of $\left(\Z/\!\raisebox{-.65ex}{\ensuremath{n\Z}}\right)^{\times}$ was sufficiently large in the sense that, for some $\epsilon >0$, $$\frac{d_n\left(G_n^{(d)}\right)}{n^{1/2-\epsilon}}\longrightarrow 0$$ as $n$ tends to infinity, with the notation of Corollary~\ref{regularitepuissances}. Otherwise, no use whatsoever of any specific property of the group of $d^{th}$ powers in a reduced system of residues modulo $n$ was made. Consequently, apart from some minor modifications, the same proof as that provided for Corollary~\ref{thminteret} demonstrates Theorem~\ref{extensiondf}.

Note that in Corollary~\ref{thminteret}, the denominators of the rational approximants were prescribed to be $d^{th}$~powers. 

\begin{rem}
Condition~(c) in Theorem~\ref{extensiondf} is derived from the fact that the P\'olya--Vinogradov inequality (Theorem~\ref{povi}) gives $2\sqrt{n}\log n$ as an upper bound for the absolute value of the sum of values of a non--principal Dirichlet character to the modulus~$n$ and the fact that $$ \frac{2\sqrt{n}2^{\omega (n)} \log n }{n} = o\left(\frac{1}{n^{1/2-\epsilon}} \right)$$ for any $\epsilon >0$ (see the proof of Theorem~\ref{regularitessgrpe}). Therefore, any improvement of the P\'olya--Vinogradov inequality would lead to a condition weaker than~(c). However, stated in this form, the exponent $1/2 - \epsilon$ for some $\epsilon >0$ appearing in condition~(c) cannot be improved if a general result is required~: indeed, assuming the Riemann Hypothesis for $L$--functions (i.e. the Generalized Riemann Hypothesis),  E.Bach~\cite{ebach} has shown that a sharper upper bound for the sum of values of a non--principal Dirichlet character to the modulus~$n$ was $2\sqrt{n}\log\log n$. Up to a constant, this is best possible since in 1932 Paley~\cite{paley} proved that there exist infinitely many quadratic characters~$\chi$ (i.e. characters of the form $\chi(n) = \left(\frac{n}{m} \right)$ for some odd integer $m$, where $\left(\frac{n}{m} \right)$ is the Jacobi symbol) such that there exists a constant~$c>0$ which satisfy for some $N\in\N^*$ the inequality $$\left|\sum_{n=1}^{N}\chi (n) \right|> c  \sqrt{n} \log \log n .$$
\end{rem}

\renewcommand{\abstractname}{Acknowledgements}
\begin{abstract}
The author would like to thank his PhD supervisor Detta Dickinson for suggesting the problem and for discussions which helped to develop ideas put forward. He is supported by the Science Foundation Ireland grant RFP11/MTH3084.
\end{abstract}

\bibliographystyle{plain}
\bibliography{extension}

\end{document}